\newtheorem{theorem}{Theorem}[section]
\newtheorem{corollary}[theorem]{Corollary}
\newtheorem{proposition}[theorem]{Proposition}
\theoremstyle{definition}
\newtheorem{definition}[theorem]{Definition}
\newtheorem{example}[theorem]{Example}
\theoremstyle{remark}
\newtheorem{remark}[theorem]{Remark}
\numberwithin{equation}{section}
\begin{document}

\title{Symmetric Differentiation on Time Scales}

\thanks{This is a preprint of a paper whose final and definite form 
will be published in \emph{Applied Mathematics Letters}.
Submitted 30-Jul-2012; revised 07-Sept-2012; accepted 10-Sept-2012.}


\author[A. M. C. Brito da Cruz]{Artur M. C. Brito da Cruz}

\address{Escola Superior de Tecnologia de Set\'{u}bal,
Estefanilha, 2910-761 Set\'{u}bal, Portugal
\and
Center for Research and Development in Mathematics and Applications,
University of Aveiro, 3810-193 Aveiro, Portugal}

\email{artur.cruz@estsetubal.ips.pt}


\author[N. Martins]{Nat\'{a}lia Martins}

\address{Center for Research and Development in Mathematics and Applications,
Department of Mathematics, University of Aveiro, 3810-193 Aveiro, Portugal}

\email{natalia@ua.pt}


\author[D. F. M. Torres]{Delfim F. M. Torres}

\address{Center for Research and Development in Mathematics and Applications,
Department of Mathematics, University of Aveiro, 3810-193 Aveiro, Portugal}

\email{delfim@ua.pt}


\subjclass[2010]{Primary: 26E70; Secondary 39A13.}

\date{}


\begin{abstract}
We define a symmetric derivative on an arbitrary nonempty closed subset
of the real numbers and derive some of its properties. It is shown that
real-valued functions defined on time scales that are neither delta
nor nabla differentiable can be symmetric differentiable.
\end{abstract}

\keywords{Symmetric derivative; quantum calculus; time scales}

\maketitle


\section{Introduction}

Symmetric properties of functions are very useful in a large number of
problems. Particularly in the theory of trigonometric series, applications
of such properties are well known \cite{Zygmund}. Differentiability
is one of the most important properties in the theory of functions
of real variables. However, even simple functions such as
\begin{equation}
\label{ts:1}
f\left( t\right)
=\left \vert t\right \vert,
\quad g\left( t\right)
= \begin{cases}
t\sin \frac{1}{t} \, , & t\neq 0 \\
0 \, , &  t=0,
\end{cases}
\quad h\left( t\right) = \frac{1}{t^{2}} \, , t \neq 0,
\end{equation}
do not have (classical) derivative at $t=0$.
Authors like Riemann, Schwarz, Peano, Dini,
and de la Vall\'{e}e-Poussin, extended the
classical derivative in different ways,
depending on the purpose \cite{Zygmund}.
One of those notions is the symmetric derivative:
\begin{equation}
\label{eq:sd:r}
f^{s}\left( t\right)
=\lim_{h\rightarrow 0}\frac{f\left( t+h\right) -f\left(t-h\right) }{2h}.
\end{equation}
While the functions in \eqref{ts:1} do not have ordinary derivatives at $t=0$,
they have symmetric derivatives:
$f^{s}\left( 0\right) = g^{s}\left( 0\right) = h^{s}\left( 0\right) =0$.
For a deeper understanding of the symmetric derivative and its properties,
we refer the reader to the specialized monograph \cite{Thomson}. Here we note that
the symmetric quotient $\left(f\left( t+h\right) -f\left( t-h\right)\right)/(2h)$
has, in general, better convergence properties than the ordinary difference
quotient \cite{Serafin}, leading naturally to the so-called
$h$-symmetric quantum calculus \cite{Kac}. A more recent theory
is the general time-scale calculus.
In 1988, Hilger introduced the calculus on time scales
as a generalization of continuous and discrete time theories,
obviating the need for separate proofs
and highlighting the differences between them \cite{Hilger,Bohner:1}.
Here we introduce the notion of symmetric derivative on time scales,
initiating the corresponding theory and putting into context some
of the recent results found in the literature.

The article is organized as follows. In Section~\ref{ts:sec:pre}
we review the necessary concepts and we fix notations.
The results are then given in Section~\ref{ts:sec:dif},
where we define the time scale symmetric derivative
and derive some of its properties.
Applications are found in
the context of quantum calculus \cite{Kac}.
Finally, we show in Section~\ref{ts:sec:fc}
that the new symmetric derivative is a generalization
of the diamond-$\alpha$ derivative \cite{Sheng},
which brings new insights.


\section{Preliminary notions and notations}
\label{ts:sec:pre}

A nonempty closed subset of $\mathbb{R}$ is called
a time scale and is denoted by $\mathbb{T}$. We assume that a time scale
has the topology inherited from $\mathbb{R}$ with the standard topology.
Two jump operators are considered: the forward jump operator
$\sigma : \mathbb{T} \rightarrow \mathbb{T}$, defined by
$\sigma \left( t\right) :=\inf \left \{ s\in \mathbb{T}:s>t\right \}$
with $\inf \emptyset =\sup \mathbb{T}$ (i.e., $\sigma \left( M\right) =M$ if
$\mathbb{T}$ has a maximum $M$), and the backward jump operator
$\rho :\mathbb{T} \rightarrow \mathbb{T}$ defined by
$\rho \left( t\right) :=\sup \left \{ s\in \mathbb{T}:s<t\right \}$
with $\sup \emptyset =\inf \mathbb{T}$ (i.e.,
$\rho \left( m\right) =m$ if $\mathbb{T}$ has a minimum $m$).
A point $t\in \mathbb{T}$ is said to be right-dense, right-scattered,
left-dense or left-scattered if $\sigma \left( t\right) =t$,
$\sigma \left( t\right) >t$, $\rho \left( t\right) =t$
or $\rho \left( t\right) <t$, respectively. A point $t\in \mathbb{T}$
is dense if it is right and left dense;
isolated if it is right and left scattered.
If $\sup\mathbb{T}$ is finite and left-scattered, then we define
$\mathbb{T}^\kappa := \mathbb{T}\setminus \{\sup\mathbb{T}\}$,
otherwise $\mathbb{T}^\kappa :=\mathbb{T}$;
if $\inf\mathbb{T}$ is finite and right-scattered, then
$\mathbb{T}_\kappa := \mathbb{T}\setminus\{\inf\mathbb{T}\}$,
otherwise $\mathbb{T}_\kappa := \mathbb{T}$. We set
$\mathbb{T}_{\kappa}^{\kappa}:=\mathbb{T}_{\kappa}\cap \mathbb{T}^{\kappa}$
and we denote $f\circ\sigma$ by $f^{\sigma}$ and $f\circ\rho$ by $f^{\rho}$.


\section{Main Results}
\label{ts:sec:dif}

In quantum calculus, the $h$-symmetric difference
and the $q$-symmetric difference, $h>0$ and $0<q<1$, are defined by
\begin{equation}
\label{eq:sd:hc}
\tilde{D}_{h}\left( t\right) =\frac{f\left( t+h\right) -f\left( t-h\right) }{2h}
\end{equation}
and
\begin{equation}
\label{eq:sd:qc}
\tilde{D}_{q}\left( t\right) =\frac{f\left( qt\right) -f\left(
q^{-1}t\right) }{\left( q-q^{-1}\right) t}, \quad t\neq 0,
\end{equation}
respectively \cite{Kac}. Here we propose a general notion
of symmetric derivative on time scales
that encompasses all the three definitions
\eqref{eq:sd:r}, \eqref{eq:sd:hc}, and \eqref{eq:sd:qc}.

\begin{definition}
We say that a function $f:\mathbb{T} \rightarrow \mathbb{R}$
is symmetric continuous at $t\in $ $\mathbb{T}$ if, for any
$\varepsilon >0$, there exists a neighborhood $U_{t} \subset \mathbb{T}$ of $t$
such that for all $s\in U_{t}$ for which  $2t-s \in U_{t}$
one has $\left \vert f\left( s\right) -f\left( 2t-s\right) \right \vert \leqslant \varepsilon$.
\end{definition}

It is easy to see that continuity implies symmetric continuity.

\begin{proposition}
Let $\mathbb{T}$ be a time scale. If $f: \mathbb{T} \rightarrow \mathbb{R}$
is a continuous function, then $f$ is symmetric continuous.
\end{proposition}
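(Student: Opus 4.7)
The plan is to derive symmetric continuity at a point $t$ directly from the usual $\varepsilon$–neighborhood formulation of continuity at that same point $t$, using the triangle inequality with $f(t)$ as an intermediate value.

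First, I would fix $t \in \mathbb{T}$ and $\varepsilon > 0$. Since $f$ is continuous at $t$, there exists a neighborhood $U_t \subset \mathbb{T}$ of $t$ such that $|f(u) - f(t)| \leqslant \varepsilon/2$ for every $u \in U_t$. I would then claim that this same $U_t$ witnesses symmetric continuity for the given $\varepsilon$.

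To verify the claim, pick any $s \in U_t$ such that $2t - s$ also lies in $U_t$. Applying the continuity estimate once at $u = s$ and once at $u = 2t - s$, the triangle inequality gives
\begin{equation*}
|f(s) - f(2t-s)| \leqslant |f(s) - f(t)| + |f(t) - f(2t-s)| \leqslant \tfrac{\varepsilon}{2} + \tfrac{\varepsilon}{2} = \varepsilon,
\end{equation*}
which is exactly the condition in the definition of symmetric continuity.

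This argument is essentially routine, and there is no real obstacle: the only mildly subtle point is to notice that the symmetric-continuity hypothesis only requires the estimate when both $s$ and its reflection $2t-s$ lie in the chosen neighborhood, so a single neighborhood obtained from continuity at $t$ handles both evaluations via the triangle inequality without having to control the reflection map $s \mapsto 2t-s$ on the time scale.
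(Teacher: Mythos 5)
Your proof is correct and is essentially the same as the paper's: both obtain a neighborhood $U_t$ from continuity at $t$ with $|f(u)-f(t)|\leqslant \varepsilon/2$ and then apply the triangle inequality through $f(t)$ to bound $|f(s)-f(2t-s)|$. No issues.
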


\begin{proof}
Since $f$ is continuous at $t\in \mathbb{T}$, then, for any $\varepsilon >0$,
there exists a neighborhood $U_{t}$ such that
$\left\vert f\left( s\right) -f\left( t\right) \right\vert <\frac{\varepsilon}{2}$
and $\left\vert f\left( 2t-s\right) -f\left( t\right) \right\vert
<\frac{\varepsilon }{2}$ for all $s\in U_{t}$ for which $2t-s \in  U_{t}$. Thus,
$\left\vert f\left( s\right) -f\left( 2t-s\right) \right\vert
\leqslant \left\vert f\left( s\right) -f\left( t\right) \right\vert +\left\vert
f\left( t\right) -f\left( 2t-s\right) \right\vert <\varepsilon$.
\end{proof}

The next example shows that symmetric continuity does not imply continuity.

\begin{example}
Consider the function $f:\mathbb{R} \rightarrow \mathbb{R}$ defined by
\[
f\left( t\right) =
\begin{cases}
0 \, , &  \text{ if } \, t\neq 0, \\
1 \, , &  \text{ if } \, t=0.
\end{cases}
\]
Function $f$ is symmetric continuous at $0$: for any $\varepsilon>0$,
there exists a neighborhood $U_{t}$ of $t=0$ such that
$\left\vert f\left( s\right) -f\left( -s\right) \right\vert =0<\varepsilon$
for all $s\in U_{t}$ for which $-s\in  U_{t}$.
However, $f$ is not continuous at $0$.
\end{example}

\begin{definition}
Let $f:\mathbb{T} \rightarrow \mathbb{R}$
and $t\in \mathbb{T}_{\kappa}^{\kappa}$.
The symmetric derivative of $f$ at $t$,
denoted by $f^{\diamondsuit }\left( t\right)$, is the real number
(provided it exists) with the property that, for any $\varepsilon >0$,
there exists a neighborhood $U \subset \mathbb{T}$ of $t$ such that
\begin{equation*}
\left \vert \left[ f^{\sigma }\left( t\right) -f\left( s\right) +f\left(
2t-s\right) -f^{\rho }\left( t\right) \right] -f^{\diamondsuit }\left(
t\right) \left[ \sigma \left( t\right) +2t-2s-\rho \left( t\right) \right]
\right \vert
\leqslant \varepsilon \left \vert \sigma \left( t\right)
+2t-2s-\rho \left(t\right) \right \vert
\end{equation*}
for all $s\in U$ for which  $2t-s \in U$.
A function $f$ is said to be symmetric differentiable provided
$f^{\diamondsuit }\left(t\right)$ exists for all
$t\in \mathbb{T}_{\kappa}^{\kappa}$.
\end{definition}

Some useful properties of the symmetric derivative
are given in Theorem~\ref{ts:propriedade}.

\begin{theorem}
\label{ts:propriedade}
Let $f:\mathbb{T} \rightarrow \mathbb{R}$ and $t\in \mathbb{T}_{\kappa}^{\kappa}$.
The following holds:

\begin{enumerate}
\item[(i)] Function $f$ has at most one symmetric derivative at $t$.

\item[(ii)] If $f$ is symmetric differentiable at $t$, then $f$ is symmetric
continuous at $t$.

\item[(iii)] If $f$ is continuous at $t$ and $t$ is not dense,
then $f$ is symmetric differentiable at $t$ with
$f^{\diamondsuit }\left( t\right)
=\frac{f^\sigma(t) -f^\rho(t)}{\sigma \left( t\right)
-\rho\left( t\right)}$.

\item[(iv)] If $t$ is dense, then $f$ is symmetric differentiable at $t$ if and
only if the limit
$\lim_{s\rightarrow t}\frac{f\left( 2t-s\right)-f\left( s\right)}{2t-2s}$
exists as a finite number. In this case
$f^{\diamondsuit }\left( t\right)
=\lim_{s\rightarrow t}\frac{f\left( 2t-s\right)-f\left(s\right)}{2t-2s}
=\lim_{h\rightarrow 0}\frac{f\left( t+h\right) -f\left( t-h\right) }{2h}$.

\item[(v)] If $f$ is symmetric differentiable and continuous at $t$, then
$f^{\sigma }\left( t\right) =f^{\rho }\left( t\right) +f^{\diamondsuit}\left( t\right)
\left[ \sigma \left( t\right) -\rho \left( t\right) \right]$.
\end{enumerate}
\end{theorem}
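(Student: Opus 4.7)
My plan is to handle the five items in the order (v), (i), (iv), (iii), (ii), because the identity in (v) emerges directly from the definition and then serves as a workhorse. The first move is to specialise the defining inequality to $s=t$: since $t\in U$ and $2t-t=t\in U$ automatically, the bracket collapses to $f^{\sigma}(t)-f^{\rho}(t)-f^{\diamondsuit}(t)[\sigma(t)-\rho(t)]$, giving
\[
|f^{\sigma}(t)-f^{\rho}(t)-f^{\diamondsuit}(t)[\sigma(t)-\rho(t)]|\le\varepsilon[\sigma(t)-\rho(t)]
\]
for every $\varepsilon>0$. This forces the identity of (v); the continuity hypothesis there is used only to identify $f^{\sigma}(t)$ and $f^{\rho}(t)$ with $f(t)$ at dense points, where the identity reduces to $0=0$.

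For (i), if both $L_{1}$ and $L_{2}$ satisfy the definition, subtracting the two inequalities and using the triangle inequality yields $|L_{1}-L_{2}|\,|\sigma(t)+2t-2s-\rho(t)|\le 2\varepsilon|\sigma(t)+2t-2s-\rho(t)|$, so it suffices to choose $s$ making the common factor nonzero: $s=t$ at non-dense points (factor equals $\sigma(t)-\rho(t)>0$) and some $s\ne t$ in a common symmetric neighbourhood at dense points (factor equals $2(t-s)\ne 0$). For (iv), at a dense $t$ we have $\sigma(t)=\rho(t)=t$ and $f^{\sigma}(t)=f^{\rho}(t)=f(t)$, so the defining inequality collapses to $|f(2t-s)-f(s)-2f^{\diamondsuit}(t)(t-s)|\le 2\varepsilon|t-s|$, which is precisely the Cauchy condition for $\lim_{s\to t}(f(2t-s)-f(s))/(2t-2s)=f^{\diamondsuit}(t)$; the substitution $h=t-s$ produces the equivalent form $(f(t+h)-f(t-h))/(2h)$. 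For (iii), I set $L:=(f^{\sigma}(t)-f^{\rho}(t))/(\sigma(t)-\rho(t))$ (well-defined since $\sigma(t)>\rho(t)$), compute that the bracket in the defining inequality reduces to $f(2t-s)-f(s)-2L(t-s)$, and observe that continuity of $f$ at $t$ makes $|f(2t-s)-f(s)|$ arbitrarily small, while $|\sigma(t)+2t-2s-\rho(t)|$ is bounded below by a positive constant on a neighbourhood of $t$; the required estimate follows by standard bookkeeping.

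The delicate step, and the one I expect to be the main obstacle, is (ii). A naive application of the defining inequality bounds $[f^{\sigma}(t)-f(s)+f(2t-s)-f^{\rho}(t)]-f^{\diamondsuit}(t)[\sigma(t)+2t-2s-\rho(t)]$ rather than the target $f(s)-f(2t-s)$, and at non-dense points the extraneous constant $f^{\sigma}(t)-f^{\rho}(t)$ does not vanish with $s$. The trick is to feed the identity from (v) into the defining inequality, rewriting it as
\[
|f(2t-s)-f(s)-2f^{\diamondsuit}(t)(t-s)|\le\varepsilon[\sigma(t)-\rho(t)+2|t-s|],
\]
which yields $|f(s)-f(2t-s)|\le 2|f^{\diamondsuit}(t)|\,|t-s|+\varepsilon[\sigma(t)-\rho(t)+2|t-s|]$. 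Given $\eta>0$, I would first restrict to a preliminary neighbourhood where $|t-s|\le 1$, choose $\varepsilon$ so small that the second summand stays below $\eta/2$ throughout this neighbourhood, extract the corresponding $U$ from the definition, and then shrink $U$ further so that the linear-in-$|t-s|$ first summand is also below $\eta/2$; this delivers symmetric continuity uniformly in both the dense and the non-dense case.
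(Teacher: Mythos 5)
Your proof is correct, and for items (i), (iii) and (iv) it follows essentially the same route as the paper's. The genuine difference lies in (v) and (ii). The paper proves (v) last and only under the continuity hypothesis, splitting into the dense case (where the identity is trivial because $\sigma(t)=\rho(t)$) and the non-dense case (where it invokes (iii)); you instead obtain (v) first, by specialising the defining inequality to the always-admissible point $s=t$ and letting $\varepsilon\to 0$, which shows that the identity $f^{\sigma}(t)-f^{\rho}(t)=f^{\diamondsuit}(t)\left[\sigma(t)-\rho(t)\right]$ holds for \emph{every} symmetric differentiable $f$, continuity not needed --- a small but real strengthening of the statement (and note that at dense points $f^{\sigma}(t)=f^{\rho}(t)=f(t)$ automatically, so continuity is not even needed there). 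You then feed this exact identity into the defining inequality to prove (ii), reducing the bracket to $f(2t-s)-f(s)-2f^{\diamondsuit}(t)(t-s)$ exactly; the paper achieves the same effect by adding and subtracting the $s=t$ instance of the inequality and bounding that term by $\varepsilon^{*}\left\vert\sigma(t)-\rho(t)\right\vert$ rather than recognising it as zero, so your version is marginally cleaner but the resulting estimates are the same. One caveat you share with the paper: in (i) at a dense point (and implicitly in (iv)) both arguments require at least one admissible $s\neq t$ with $s$ and $2t-s$ in the chosen neighbourhood of $t$ in $\mathbb{T}$, which is not guaranteed for an arbitrary time scale; you at least make this selection explicit, whereas the paper divides by $\sigma(t)+2t-2s-\rho(t)$ without comment.
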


\begin{proof}
(i) Suppose that $f$ has two symmetric derivatives at $t$,
$f_{1}^{\diamondsuit }\left( t\right)$ and
$f_{2}^{\diamondsuit }\left(t\right)$.
Then, there exists a neighborhood $U_{1}$ of $t$ such that
\begin{equation*}
\left \vert \left[ f^{\sigma }\left( t\right) -f\left( s\right)
+f\left(2t-s\right) -f^{\rho }\left( t\right) \right]
-f_{1}^{\diamondsuit }\left(t\right) \left[ \sigma \left( t\right)
+2t-2s-\rho \left( t\right) \right]\right \vert
\leqslant \frac{\varepsilon }{2}\left \vert \sigma \left( t\right)
+2t-2s-\rho \left( t\right) \right \vert
\end{equation*}
for all $s\in U_{1}$ for which $2t-s\in U_{1}$,
and a neighborhood $U_{2}$ of $t$ such that
\begin{equation*}
\left \vert \left[ f^{\sigma }\left( t\right) -f\left( s\right) +f\left(
2t-s\right) -f^{\rho }\left( t\right) \right] -f_{2}^{\diamondsuit }\left(
t\right) \left[ \sigma \left( t\right) +2t-2s-\rho \left( t\right) \right]
\right \vert
\leqslant\frac{\varepsilon }{2}\left \vert \sigma \left( t\right)
+2t-2s-\rho \left( t\right) \right \vert
\end{equation*}
for all $s\in U_{2}$ for which $2t-s\in U_{2}$. Therefore,
for all $s\in U_{1}\cap U_{2}$ for which $2t-s \in U_{1}\cap U_{2}$,
\begin{equation*}
\begin{split}
\bigg{|}&f_{1}^{\diamondsuit}\left( t\right) -f_{2}^{\diamondsuit }\left(
t\right) \bigg{|} =\left \vert \left[ f_{1}^{\diamondsuit }\left( t\right)
-f_{2}^{\diamondsuit }\left( t\right) \right] \frac{\sigma \left( t\right)
+2t-2s-\rho \left( t\right) }{\sigma \left( t\right) +2t-2s-\rho \left(
t\right) }\right \vert  \\
&=\frac{1}{\left \vert \sigma \left( t\right) +2t-2s-\rho \left(
t\right) \right \vert } \bigg{|}\left[ f^{\sigma }\left( t\right) -f\left( s\right) +f\left(
2t-s\right) -f^{\rho }\left( t\right) \right] -f_{2}^{\diamondsuit }\left(
t\right) \left[ \sigma \left( t\right) +2t-2s-\rho \left( t\right) \right]\\
&\qquad -\left[ f^{\sigma }\left( t\right) -f\left( s\right) +f\left( 2t-s\right)
-f^{\rho }\left( t\right) \right] +f_{1}^{\diamondsuit }\left( t\right)
\left[ \sigma \left( t\right) +2t-2s-\rho \left( t\right) \right] \bigg{|}\\
&\leqslant \frac{1}{\left \vert \sigma \left( t\right) +2t-2s-\rho \left(
t\right) \right \vert } \bigg{(}\left \vert \left[ f^{\sigma }\left( t\right) -f\left(
s\right) +f\left( 2t-s\right) -f^{\rho }\left( t\right) \right]
-f_{2}^{\diamondsuit }\left( t\right) \left[ \sigma \left( t\right)
+2t-2s-\rho \left( t\right) \right] \right \vert  \\
&\qquad +\left \vert \left[ f^{\sigma }\left( t\right) -f\left( s\right) +f\left(
2t-s\right) -f^{\rho }\left( t\right) \right] -f_{1}^{\diamondsuit }\left(
t\right) \left[ \sigma \left( t\right) +2t-2s-\rho \left( t\right) \right]
\right \vert \bigg{)} \\
&\leqslant \varepsilon.
\end{split}
\end{equation*}

\noindent (ii) From hypothesis, for any $\epsilon^{*}>0$,
there exists a neighborhood $U$ of $t$ such that
\begin{equation*}
\left \vert \left[ f^{\sigma }\left( t\right) -f\left( s\right) +f\left(
2t-s\right) -f^{\rho }\left( t\right) \right] -f^{\diamondsuit }\left(
t\right) \left[ \sigma \left( t\right) +2t-2s-\rho \left( t\right) \right]
\right \vert
\leqslant \varepsilon ^{\ast }\left \vert \sigma \left( t\right)
+2t-2s-\rho \left( t\right) \right \vert
\end{equation*}
for all $s\in U$ for which  $2t-s \in U$.
Therefore, for all $s\in U \cap \left] t-\varepsilon^{\ast },t
+\varepsilon ^{\ast }\right[$,
\begin{equation*}
\begin{split}
\vert  f\left( 2t-s\right)  -f\left( s\right) \vert
&\leqslant \left \vert \left[ f^{\sigma }\left( t\right) -f\left( s\right)
+f\left( 2t-s\right) -f^{\rho }\left( t\right) \right] -f^{\diamondsuit
}\left( t\right) \left[ \sigma \left( t\right) +2t-2s-\rho \left( t\right)
\right] \right \vert  \\
&\qquad +\left \vert \left[ f^{\sigma }\left( t\right) -f^{\rho }\left( t\right)
\right] -f^{\diamondsuit }\left( t\right) \left[ \sigma \left( t\right)
+2t-2s-\rho \left( t\right) \right] \right \vert  \\
&\leqslant \left \vert \left[ f^{\sigma }\left( t\right) -f\left( s\right)
+f\left( 2t-s\right) -f^{\rho }\left( t\right) \right] -f^{\diamondsuit
}\left( t\right) \left[ \sigma \left( t\right) +2t-2s-\rho \left( t\right)
\right] \right \vert  \\
&\qquad +\left \vert \left[ f^{\sigma }\left( t\right) -f\left( t\right) +f\left(
t\right) -f^{\rho }\left( t\right) \right] -f^{\diamondsuit }\left( t\right)
\left[ \sigma \left( t\right) +2t-2t-\rho \left( t\right) \right]
\right \vert  \\
&\qquad +2\left \vert f^{\diamondsuit }\left( t\right) \right \vert \left \vert
t-s\right \vert\\
&\leqslant \varepsilon ^{\ast }\left \vert \sigma \left( t\right)
+2t-2s-\rho \left( t\right) \right \vert +\varepsilon ^{\ast }\left \vert
\sigma \left( t\right) +2t-2t-\rho \left( t\right) \right \vert +2\left \vert
f^{\diamondsuit }\left( t\right) \right \vert \left \vert t-s\right \vert  \\
&\leqslant \varepsilon ^{\ast }\left \vert \sigma \left( t\right) -\rho
\left( t\right) \right \vert +2\varepsilon ^{\ast }\left \vert t-s\right \vert
+\varepsilon ^{\ast }\left \vert \sigma \left( t\right) -\rho \left( t\right)
\right \vert +2\left \vert f^{\diamondsuit }\left( t\right) \right \vert
\left \vert t-s\right \vert  \\
&= 2\varepsilon ^{\ast }\left \vert \sigma \left( t\right) -\rho
\left( t\right) \right \vert +2\left( \varepsilon ^{\ast }+\left \vert
f^{\diamondsuit }\left( t\right) \right \vert \right) \left \vert
t-s\right \vert  \\
&\leqslant 2\varepsilon ^{\ast }\left \vert \sigma \left( t\right) -\rho \left(
t\right) \right \vert +2\left( \varepsilon ^{\ast }+\left \vert
f^{\diamondsuit }\left( t\right) \right \vert \right) \varepsilon ^{\ast } \\
&= 2\varepsilon ^{\ast }\left[ \left \vert \sigma \left( t\right)
-\rho \left( t\right) \right \vert +\varepsilon ^{\ast }+\left \vert
f^{\diamondsuit }\left( t\right) \right \vert \right],
\end{split}
\end{equation*}
proving that $f$ is symmetric continuous at $t$.

\noindent (iii) Suppose that $t\in\mathbb{T}_{\kappa}^{\kappa}$
is not dense and $f$ is continuous at $t$. Then,
\begin{equation*}
\lim_{s\rightarrow t}\frac{f^{\sigma }\left( t\right) -f\left( s\right)
+f\left( 2t-s\right) -f^{\rho }\left( t\right) }{\sigma \left( t\right)
+2t-2s-\rho \left( t\right) }=\frac{f^{\sigma }\left( t\right)
-f^{\rho}\left( t\right) }{\sigma \left( t\right) -\rho \left( t\right)}.
\end{equation*}
Hence, for any $\varepsilon >0$, there exists a neighborhood $U$ of $t$ such that
\begin{equation*}
\left \vert \frac{f^{\sigma }\left( t\right) -f\left( s\right) +f\left(
2t-s\right) -f^{\rho }\left( t\right) }{\sigma \left( t\right) +2t-2s-\rho
\left( t\right) }-\frac{f^{\sigma }\left( t\right)
-f^{\rho }\left( t\right)}{\sigma \left( t\right)
-\rho \left( t\right) }\right \vert \leqslant \varepsilon
\end{equation*}
for all $s\in U$ for which  $2t-s \in U$. It follows that
\begin{multline*}
\left \vert \left[ f^{\sigma }\left( t\right) -f\left( s\right)
+f\left(2t-s\right) -f^{\rho }\left( t\right) \right]
-\frac{f^{\sigma }\left(t\right) -f^{\rho }\left( t\right)}{\sigma \left( t\right)
-\rho \left(t\right) }\left[ \sigma \left( t\right)
+2t-2s-\rho \left( t\right) \right]\right\vert\\
\leqslant \varepsilon \left[ \sigma \left( t\right)
+2t-2s-\rho \left(t\right) \right],
\end{multline*}
which proves that
$f^{\diamondsuit }\left( t\right) =\left(f^{\sigma }\left( t\right)
-f^{\rho}\left( t\right)\right)/\left(\sigma \left( t\right) -\rho \left( t\right)\right)$.

\noindent (iv) Assume that $f$ is symmetric differentiable at $t$ and $t$ is dense. Let
$\varepsilon >0$ be given. Then, there exists a neighborhood $U$ of $t$ such that
\begin{equation*}
\left \vert \left[ f^{\sigma }\left( t\right) -f\left( s\right)
+f\left(2t-s\right) -f^{\rho }\left( t\right) \right]
-f^{\diamondsuit }\left(t\right) \left[ \sigma \left( t\right)
+2t-2s-\rho \left( t\right) \right]\right \vert
\leqslant \varepsilon \left \vert \sigma \left( t\right)
+2t-2s-\rho \left(t\right) \right \vert
\end{equation*}
for all $s\in U$ for which  $2t-s \in U$.
Since $t$ is dense,
$\left \vert \left[ -f\left( s\right) +f\left( 2t-s\right) \right]
-f^{\diamondsuit }\left( t\right) \left[ 2t-2s\right] \right \vert \leqslant
\varepsilon \left \vert 2t-2s\right \vert$
for all $s\in U$ for which  $2t-s \in U$. It follows that
$\left \vert \frac{f\left( 2t-s\right)-f\left( s\right)}{2t-2s}
-f^{\diamondsuit }\left( t\right) \right \vert \leqslant \varepsilon$
for all $s\in U$ with $s\neq t$. Therefore, we get the desired result:
$f^{\diamondsuit }\left( t\right)
=\lim_{s\rightarrow t}\frac{f\left( 2t-s\right)-f\left(s\right)}{2t-2s}$.
Conversely, let us suppose that $t$ is dense and the limit
$\lim_{s\rightarrow t}\frac{f\left( 2t-s\right)-f\left( s\right)}{2t-2s} =: L$
exists. Then, there exists a neighborhood $U$ of $t$ such that
$\left \vert \frac{f\left( 2t-s\right)-f\left( s\right)}{2t-2s}
-L\right \vert \leqslant \varepsilon$
for all  $s\in U$ for which  $2t-s \in U$.
Because $t$ is dense, we have
$\left \vert \frac{f^{\sigma }\left( t\right) -f\left( s\right) +f\left(
2t-s\right) -f^{\rho }\left( t\right) }{\sigma \left( t\right) +2t-2s-\rho
\left( t\right) }-L\right \vert \leqslant \varepsilon$.
Therefore,
$$
\left \vert \left[ f^{\sigma }\left( t\right) -f\left( s\right)
+f\left( 2t-s\right) -f^{\rho }\left( t\right) \right] -L\left[ \sigma
\left( t\right) +2t-2s-\rho \left( t\right) \right] \right \vert
\leqslant \varepsilon \left \vert \sigma \left( t\right) +2t-2s-\rho \left(
t\right) \right \vert,
$$
which leads us to the conclusion that $f$ is symmetric differentiable and
$f^{\diamondsuit }\left( t\right)  = L$.
Note that if we use the substitution $s=t+h$, then
$f^{\diamondsuit }\left( t\right)
=\lim_{h\rightarrow 0}\frac{f\left( t+h\right) -f\left( t-h\right) }{2h}$.

\noindent (v) If $t$ is a dense point,
then $\sigma \left( t\right) =\rho \left(t\right) $ and
$f^{\sigma }\left( t\right) =f^{\rho }\left( t\right)
+f^{\diamondsuit }\left( t\right) \left[ \sigma \left( t\right)
-\rho \left(t\right) \right]$.
If $t$ is not dense, and since $f$ is continuous, then
$f^{\diamondsuit }\left( t\right) =\frac{f^{\sigma }\left( t\right)
-f^{\rho }\left( t\right) }{\sigma \left( t\right) -\rho \left( t\right) }
\Leftrightarrow f^{\sigma }\left( t\right) =f^{\rho }\left( t\right)
+f^{\diamondsuit }\left( t\right) \left[ \sigma \left( t\right) -\rho \left(
t\right) \right]$.
\end{proof}

\begin{example}
Let $\mathbb{T} = \mathbb{R}$. Then our symmetric derivative
coincides with the classic symmetric derivative \eqref{eq:sd:r}:
$f^{\diamondsuit} = f^{s}$.
\end{example}

\begin{example}
Let $\mathbb{T} = h \mathbb{Z}$, $h>0$.
Then the symmetric derivative is the symmetric difference operator
\eqref{eq:sd:hc}: $f^{\diamondsuit} = \tilde{D}_{h}$.
\end{example}

\begin{example}
Let  $\mathbb{T=}\overline{q^{\mathbb{Z}}}$, $0<q<1$.
Then the symmetric derivative coincides with the $q$-symmetric
difference operator \eqref{eq:sd:qc}:
$f^{\diamondsuit } = \tilde{D}_{q}$.
\end{example}

\begin{remark}
Independently of the time scale $\mathbb{T}$,
the symmetric derivative of a constant is zero
and the symmetric derivative of the identity function is one.
\end{remark}

\begin{remark}
An alternative way to define the symmetric derivative of $f$
at $t\in\mathbb{T}_{\kappa}^{\kappa}$ consists in saying
that the limit
$f^{\diamondsuit }\left( t\right) =\lim_{s\rightarrow t}\frac{f^{\sigma
}\left( t\right) -f\left( s\right) +f\left( 2t-s\right) -f^{\rho }\left(
t\right) }{\sigma \left( t\right) +2t-2s-\rho \left( t\right) }
=\lim_{h\rightarrow 0}\frac{f^{\sigma }\left( t\right) -f\left( t+h\right)
+f\left( t-h\right) -f^{\rho }\left( t\right) }{\sigma \left( t\right)
-2h-\rho \left( t\right)}$
exists.
\end{remark}

\begin{theorem}
Let $f,g:\mathbb{T} \rightarrow \mathbb{R}$ be two symmetric differentiable functions
at $t\in \mathbb{T}_{\kappa}^{\kappa}$ and $\lambda \in \mathbb{R}$.
The following holds:
\begin{enumerate}
\item[(i)] Function $f+g$ is symmetric differentiable at $t$ with
$\left( f+g\right) ^{\diamondsuit }\left( t\right)
=f^{\diamondsuit }\left(t\right) +g^{\diamondsuit }\left( t\right)$.

\item[(ii)] Function $\lambda f$ is symmetric differentiable at $t$ with
$\left( \lambda f\right)^{\diamondsuit }\left( t\right)
=\lambda f^{\diamondsuit }\left( t\right)$.

\item[(iii)] If $f$ and $g$ are continuous at $t$, then
$fg$ is symmetric differentiable at $t$ with
$\left(fg\right)^{\diamondsuit }\left( t\right)
=f^{\diamondsuit }\left(t\right) g^{\sigma }\left( t\right)
+f^{\rho }\left( t\right) g^{\diamondsuit }\left( t\right)$.

\item[(iv)] If $f$ is continuous at $t$ and
$f^{\sigma }\left( t\right) f^{\rho}\left( t\right) \neq 0$, then
$1/f$ is symmetric differentiable at $t$ with
$\left(\frac{1}{f}\right)^{\diamondsuit }\left( t\right)
=-\frac{f^{\diamondsuit }\left( t\right) }{f^{\sigma }\left(
t\right) f^{\rho}\left( t\right)}$.

\item[(v)] If $f$ and $g$ are continuous at $t$ and
$g^{\sigma }\left( t\right) g^{\rho }\left( t\right) \neq 0$,
then $f/g$ is symmetric differentiable at $t$ with
$\left( \frac{f}{g}\right) ^{\diamondsuit }\left( t\right)
=\frac{f^{\diamondsuit }\left( t\right) g^{\rho }\left( t\right) -f^{\rho }\left(
t\right) g^{\diamondsuit }\left( t\right)}{g^{\sigma }\left( t\right)
g^{\rho }\left( t\right)}$.
\end{enumerate}
\end{theorem}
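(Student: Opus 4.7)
The plan is to reduce each part to the characterizations in Theorem~\ref{ts:propriedade}, exploiting in particular the dense vs.\ non-dense dichotomy of parts (iii)--(iv) together with the identity (v).

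Parts (i) and (ii) will follow directly from the $\varepsilon$-definition. For (i), I would choose a common neighborhood on which the defining inequalities for $f^{\diamondsuit}(t)$ and $g^{\diamondsuit}(t)$ hold with tolerance $\varepsilon/2$, add them, and apply the triangle inequality to see that $f^{\diamondsuit}(t) + g^{\diamondsuit}(t)$ satisfies the defining property for $(f+g)^{\diamondsuit}(t)$; uniqueness from Theorem~\ref{ts:propriedade}(i) closes the argument. For (ii), I would factor $|\lambda|$ out of the defining inequality, handling $\lambda = 0$ separately via the remark that the symmetric derivative of a constant is zero.

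The main obstacle is the product rule (iii), because the asymmetric formula $f^{\diamondsuit} g^{\sigma} + f^{\rho} g^{\diamondsuit}$ must emerge from an essentially symmetric starting expression. I would argue by cases. If $t$ is not dense, then $f$, $g$, and hence $fg$ are continuous at $t$, so Theorem~\ref{ts:propriedade}(iii) gives
\[
(fg)^{\diamondsuit}(t) = \frac{f^{\sigma}(t) g^{\sigma}(t) - f^{\rho}(t) g^{\rho}(t)}{\sigma(t) - \rho(t)}.
\]
Substituting $f^{\sigma}(t) = f^{\rho}(t) + f^{\diamondsuit}(t)[\sigma(t) - \rho(t)]$ and $g^{\sigma}(t) = g^{\rho}(t) + g^{\diamondsuit}(t)[\sigma(t) - \rho(t)]$ from Theorem~\ref{ts:propriedade}(v), expanding and cancelling $f^{\rho}(t) g^{\rho}(t)$, the quotient collapses to $f^{\diamondsuit}(t) g^{\sigma}(t) + f^{\rho}(t) g^{\diamondsuit}(t)$, after reabsorbing the cross term $f^{\diamondsuit}(t) g^{\diamondsuit}(t)[\sigma(t) - \rho(t)]^{2}$ via (v). If $t$ is dense, I would invoke Theorem~\ref{ts:propriedade}(iv), add and subtract $f(s) g(2t-s)$ inside the symmetric quotient $[f(2t-s) g(2t-s) - f(s) g(s)]/(2t-2s)$, and pass to the limit: continuity at the dense point $t$ gives $f(s) \to f(t) = f^{\rho}(t)$ and $g(2t-s) \to g(t) = g^{\sigma}(t)$, while the two remaining symmetric difference quotients converge to $f^{\diamondsuit}(t)$ and $g^{\diamondsuit}(t)$.

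For (iv), the same two-case strategy applies: in the non-dense case, the hypothesis $f^{\sigma}(t) f^{\rho}(t) \neq 0$ together with the continuity of $f$ keeps $1/f$ continuous where needed, and Theorem~\ref{ts:propriedade}(iii) applied to $1/f$ followed by a common-denominator simplification --- using (v) to recognise $f^{\rho}(t) - f^{\sigma}(t) = -f^{\diamondsuit}(t)[\sigma(t) - \rho(t)]$ --- yields the stated formula; in the dense case, the symmetric limit of $1/f$ is computed directly from continuity of $f$ at $t$ and $f(t) \neq 0$. Finally, for the quotient rule (v), I would write $f/g = f \cdot (1/g)$ and chain (iii) with (iv), collecting $f^{\diamondsuit}(t)\,(1/g)^{\sigma}(t) + f^{\rho}(t)\,(1/g)^{\diamondsuit}(t)$ over the common denominator $g^{\sigma}(t) g^{\rho}(t)$ to reach the claimed expression.
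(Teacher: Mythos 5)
Your proposal is correct, and for parts (i), (ii), (iii) and (v) it follows essentially the same route as the paper: linearity comes straight from the defining limit/inequality, the product rule is proved by the same dense versus non-dense dichotomy (your non-dense computation, which substitutes $f^{\sigma}(t)=f^{\rho}(t)+f^{\diamondsuit}(t)[\sigma(t)-\rho(t)]$ from Theorem~\ref{ts:propriedade}(v) and reabsorbs the cross term, is algebraically the same as the paper's add-and-subtract of $f^{\rho}(t)g^{\sigma}(t)$), and the quotient rule is obtained by chaining (iii) with (iv). The one genuine divergence is part (iv): the paper applies the product rule to the identity $\left(\frac{1}{f}\times f\right)(t)=1$ and solves for $\left(\frac{1}{f}\right)^{\diamondsuit}(t)$, which is quicker but tacitly assumes that $1/f$ is already symmetric differentiable at $t$ --- precisely what is to be shown. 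Your direct two-case computation (Theorem~\ref{ts:propriedade}(iii) at a non-dense point, the symmetric limit of Theorem~\ref{ts:propriedade}(iv) at a dense point, where $f^{\sigma}(t)f^{\rho}(t)=f(t)^{2}\neq 0$ supplies the nonvanishing needed to pass to the limit) establishes existence of the derivative as well as its value, so on this point your argument is actually more complete than the paper's, at the cost of repeating the dichotomy.
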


\begin{proof}
(i) For $t\in \mathbb{T}_{\kappa}^{\kappa}$ we have
\begin{equation*}
\begin{split}
\left( f+g\right)^{\diamondsuit }\left( t\right)
&=\lim_{s\rightarrow t}\frac{\left( f+g\right) ^{\sigma }\left( t\right)
-\left( f+g\right) \left( s\right) +\left( f+g\right) \left( 2t-s\right)
-\left( f+g\right) ^{\rho }\left( t\right) }{\sigma \left( t\right)
+2t-2s-\rho \left( t\right) } \\
&=\lim_{s\rightarrow t}\frac{f^{\sigma }\left( t\right) -f\left( s\right)
+f\left( 2t-s\right) -f^{\rho }\left( t\right) }{\sigma \left( t\right)
+2t-2s-\rho \left( t\right)}
+ \lim_{s\rightarrow t}\frac{g^{\sigma }\left( t\right)
-g\left( s\right) +g\left( 2t-s\right) -g^{\rho }\left( t\right) }{\sigma
\left( t\right) +2t-2s-\rho \left( t\right) } \\
&= f^{\diamondsuit }\left( t\right) +g^{\diamondsuit }\left( t\right).
\end{split}
\end{equation*}

\noindent (ii) Let $t\in \mathbb{T}_{\kappa}^{\kappa}$
and $\lambda \in \mathbb{R}$. Then,
\begin{equation*}
\begin{split}
\left( \lambda f\right) ^{\diamondsuit }\left( t\right)
&=\lim_{s\rightarrow t}\frac{\left( \lambda f\right) ^{\sigma }\left(
t\right) -\left( \lambda f\right) \left( s\right) +\left( \lambda f\right)
\left( 2t-s\right) -\left( \lambda f\right) ^{\rho }\left( t\right) }{\sigma
\left( t\right) +2t-2s-\rho \left( t\right) } \\
&=\lambda \lim_{s\rightarrow t}\frac{f^{\sigma }\left( t\right) -f\left(
s\right) +f\left( 2t-s\right) -f^{\rho }\left( t\right) }{\sigma \left(
t\right) +2t-2s-\rho \left( t\right)}
=\lambda f^{\diamondsuit }\left( t\right) .
\end{split}
\end{equation*}

\noindent (iii) Let us assume that
$t\in \mathbb{T}_{\kappa}^{\kappa}$
and $f$ and $g$ are continuous at $t$.
If $t$ is dense, then
\begin{equation*}
\begin{split}
\left( fg\right) ^{\diamondsuit }\left( t\right)
&=\lim_{h\rightarrow 0}\frac{\left( fg\right) \left( t+h\right) -\left(
fg\right) \left( t-h\right) }{2h} \\
&=\lim_{h\rightarrow 0}\frac{f\left( t+h\right) -f\left( t-h\right) }{2h}
g\left( t+h\right) +\lim_{h\rightarrow 0}\frac{g\left( t+h\right) -g\left(
t-h\right) }{2h}f\left( t-h\right)  \\
&= f^{\diamondsuit }\left( t\right) g^{\sigma }\left( t\right)
+f^{\rho}\left( t\right) g^{\diamondsuit }\left( t\right) .
\end{split}
\end{equation*}
If $t$ is not dense, then
\begin{equation*}
\begin{split}
\left( fg\right) ^{\diamondsuit }\left( t\right)
&= \frac{\left( fg\right)^{\sigma }\left( t\right)
-\left( fg\right) ^{\rho }\left( t\right) }{\sigma\left( t\right)
-\rho \left( t\right) }
=\frac{f^{\sigma }\left( t\right) -f^{\rho }\left( t\right) }{\sigma
\left( t\right) -\rho \left( t\right) }g^{\sigma }\left( t\right)
+\frac{g^{\sigma }\left( t\right) -g^{\rho }\left( t\right) }{\sigma \left(
t\right) -\rho \left( t\right) }f^{\rho }\left( t\right)  \\
&=f^{\diamondsuit }\left( t\right) g^{\sigma }\left( t\right)
+f^{\rho}\left( t\right) g^{\diamondsuit }\left( t\right)
\end{split}
\end{equation*}
proving the intended equality.

\noindent (iv) Using the relation
$\left( \frac{1}{f}\times f\right) \left( t\right) =1$ we can write that
$0 =\left( \frac{1}{f}\times f\right) ^{\diamondsuit }\left( t\right)
= f^{\diamondsuit }\left( t\right) \left(
\frac{1}{f}\right)^{\sigma}\left( t\right)
+ f^{\rho }\left( t\right) \left(
\frac{1}{f}\right)^{\diamondsuit }\left( t\right)$.
Therefore,
$\left( \frac{1}{f}\right) ^{\diamondsuit }\left( t\right)
=-\frac{f^{\diamondsuit }\left( t\right)}{
f^{\sigma }\left( t\right) f^{\rho}\left( t\right) }$.

\noindent (v) Let $t\in \mathbb{T}_{\kappa}^{\kappa}$. Then,
\begin{equation*}
\begin{split}
\left( \frac{f}{g}\right)^{\diamondsuit }\left( t\right)
&= \left( f\times \frac{1}{g}\right) ^{\diamondsuit }\left( t\right)
= f^{\diamondsuit }\left( t\right) \left( \frac{1}{g}\right)^{\sigma}\left(
t\right) +f^{\rho }\left( t\right) \left( \frac{1}{g}\right)^{\diamondsuit }\left( t\right) \\
&= \frac{f^{\diamondsuit }\left( t\right) }{g^{\sigma }\left( t\right) }
+f^{\rho }\left( t\right) \left(
-\frac{g^{\diamondsuit }\left( t\right)}{g^{\sigma }\left( t\right) g^{\rho }\left( t\right) }\right)
= \frac{f^{\diamondsuit }\left( t\right) g^{\rho }\left( t\right)
-f^{\rho}\left( t\right) g^{\diamondsuit }\left( t\right) }{g^{\sigma }\left(
t\right) g^{\rho }\left( t\right)}.
\end{split}
\end{equation*}
\end{proof}

\begin{example}
The symmetric derivative of $f\left( t\right) =t^{2}$ is
$f^{\diamondsuit }\left( t\right) =\sigma \left( t\right) +\rho \left(t\right)$.
\end{example}

\begin{example}
The symmetric derivative of $f\left( t\right) =1/t$ is
$f^{\diamondsuit }\left( t\right)
=-\frac{1}{\sigma \left( t\right) \rho\left( t\right) }$.
\end{example}


\section{Particular Cases}
\label{ts:sec:fc}

In Section~\ref{ts:sec:dif} we introduced the symmetric derivative on a
time scale $\mathbb{T}$ and derived some of its properties. It has been
shown that the new notion unifies the symmetric derivatives
of classical analysis \cite{Thomson} and quantum calculus \cite{Kac}.
Here we note that our symmetric derivative
is different from the delta and nabla derivatives
considered in the time scale literature \cite{Hilger,Bohner:1,Bohner:2}.
A simple example of a function that is neither delta nor nabla differentiable,
in the sense of time scales, but that is symmetric differentiable,
is the absolute value function.

\begin{example}
\label{ex:dos:nts}
Let $\mathbb{T}$ be a time scale with $0\in \mathbb{T}_{\kappa}^{\kappa}$
and $f:\mathbb{T} \rightarrow \mathbb{R}$
be defined by $f\left( t\right) =\left \vert t\right \vert$.
This function is not differentiable at point $t = 0$
in the sense of time scales \cite{Hilger,Bohner:1,Bohner:2,Sheng}.
However, the symmetric derivative is always well defined:
\begin{equation*}
f^{\diamondsuit }\left( 0\right)
=\lim_{h\rightarrow 0}\frac{f^{\sigma}\left( 0\right)
-f\left( 0+h\right) +f\left( 0-h\right) -f^{\rho }\left(
0\right) }{\sigma \left( 0\right) -2h-\rho \left( 0\right) }
= \lim_{h\rightarrow 0}\frac{\sigma \left( 0\right)
+\rho \left( 0\right) }{\sigma \left( 0\right) -2h-\rho \left( 0\right)},
\end{equation*}
so that $f^{\diamondsuit}(0) = 0$ if $0$ is dense, and
$f^{\diamondsuit}(0) = (\sigma(0)+\rho(0))/(\sigma(0)-\rho(0))$
otherwise.
\end{example}

In the particular case a function is simultaneously
delta and nabla differentiable \cite{Bohner:1,Bohner:2},
Proposition~\ref{ts:delta nabla} below shows that
a relation can be done between our symmetric derivative
and the recent diamond-alpha derivative
\cite{Sheng} (Corollary~\ref{cor:diam}).

\begin{proposition}
\label{ts:delta nabla}
If $f$ is delta and nabla differentiable, then $f$ is symmetric
differentiable and, for each $t\in\mathbb{T}_{\kappa}^{\kappa}$,
$f^{\diamondsuit }\left( t\right)  =\gamma\left( t\right)
f^{\Delta}\left( t\right) +\left(1-\gamma\left( t\right)\right) f^{\nabla }\left( t\right)$,
where
\begin{equation}
\label{eq:gamma}
\gamma\left( t\right)  =\lim_{s\rightarrow t}\frac{\sigma \left(
t\right) -s}{\sigma \left( t\right) +2t-2s-\rho \left( t\right) }.
\end{equation}
\end{proposition}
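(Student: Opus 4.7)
The plan is to start from the alternative expression for $f^{\diamondsuit}(t)$ recorded in the Remark just before Proposition~\ref{ts:delta nabla} and to exhibit the symmetric quotient as a convex combination of the delta and nabla quotients. Writing both numerator and denominator as the natural two-term sums, the quotient factors as
\begin{equation*}
\frac{\sigma(t)-s}{\sigma(t)+2t-2s-\rho(t)}\cdot\frac{f^{\sigma}(t)-f(s)}{\sigma(t)-s}
+\frac{(2t-s)-\rho(t)}{\sigma(t)+2t-2s-\rho(t)}\cdot\frac{f(2t-s)-f^{\rho}(t)}{(2t-s)-\rho(t)}.
\end{equation*}
The second factor of the first term tends to $f^{\Delta}(t)$ by the very definition of the delta derivative; after the substitution $s'=2t-s$ (which also sends $s'\to t$), the second factor of the second term becomes a standard nabla quotient and tends to $f^{\nabla}(t)$. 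By \eqref{eq:gamma}, the first factors tend to $\gamma(t)$ and $1-\gamma(t)$, so passing to the limit produces the claimed identity.

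Before taking limits I would check that $\gamma(t)$ in \eqref{eq:gamma} is well-defined. At a dense point, $\sigma(t)=\rho(t)=t$ and the ratio reduces to $(t-s)/(2(t-s))=1/2$, giving $\gamma(t)=1/2$. Otherwise $\sigma(t)-\rho(t)\ne 0$, the rational function of $s$ is continuous at $s=t$, and $\gamma(t)=(\sigma(t)-t)/(\sigma(t)-\rho(t))$.

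The main obstacle is that the algebraic decomposition above is only formally justified when both sub-denominators are nonzero on the set of admissible pairs $(s,2t-s)$. In a mixed-scatter case (say $t$ right-scattered and left-dense), the only pair with both $s$ and $2t-s$ in a small neighborhood of $t$ in $\mathbb{T}$ is $s=t$, at which one sub-quotient is $0/0$. I would therefore finish with a brief case analysis based on Theorem~\ref{ts:propriedade}(iii)--(iv): at a dense $t$ the delta, nabla and classical symmetric derivatives all equal $f'(t)$ and $\gamma(t)=1/2$, so both sides agree; at an isolated $t$ one has $f^{\diamondsuit}(t)=(f^{\sigma}(t)-f^{\rho}(t))/(\sigma(t)-\rho(t))$ and, writing $\mu(t)=\sigma(t)-t$ and $\nu(t)=t-\rho(t)$, the identity is a one-line check with $\gamma(t)=\mu(t)/(\mu(t)+\nu(t))$; in the remaining mixed-scatter cases $\gamma(t)\in\{0,1\}$ and $f^{\diamondsuit}(t)$ reduces to whichever of $f^{\Delta}(t)$ or $f^{\nabla}(t)$ is the surviving term, again matching the right-hand side.
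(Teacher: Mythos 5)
Your proof is correct and follows essentially the same route as the paper: the paper's argument is precisely the convex-combination decomposition of the symmetric quotient into weighted delta and nabla quotients, followed by passage to the limit $s\to t$ and the computation of $\gamma(t)$ in the dense and non-dense cases. The case analysis you append to handle the mixed-scatter situation --- where the only admissible $s$ with $2t-s\in\mathbb{T}$ near $t$ is $s=t$ and one sub-quotient degenerates to $0/0$ --- addresses a point the paper silently glosses over, so your version is in fact the more careful one.
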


\begin{proof}
Note that
\begin{equation*}
\begin{split}
f^{\diamondsuit }\left( t\right)
&=\lim_{s\rightarrow t}\frac{f^{\sigma}\left( t\right)
-f\left( s\right) +f\left( 2t-s\right) -f^{\rho }\left(
t\right) }{\sigma \left( t\right) +2t-2s-\rho \left( t\right) } \\
&= \lim_{s\rightarrow t}\bigg{(}\frac{\sigma \left( t\right) -s}{\sigma
\left( t\right) +2t-2s-\rho \left( t\right) }\frac{f^{\sigma }\left(
t\right) -f\left( s\right) }{\sigma \left( t\right) -s}
+\frac{\left( 2t-s\right) -\rho \left( t\right) }{\sigma \left( t\right)
+2t-2s-\rho \left( t\right) }\frac{f\left( 2t-s\right) -f^{\rho }\left(
t\right) }{\left( 2t-s\right) -\rho \left( t\right) }\bigg{)} \\
&= \lim_{s\rightarrow t}\left( \frac{\sigma \left( t\right) -s}{\sigma
\left( t\right) +2t-2s-\rho \left( t\right) }f^{\Delta }\left( t\right)
+\frac{\left( 2t-s\right) -\rho \left( t\right) }{\sigma \left( t\right)
+2t-2s-\rho \left( t\right) }f^{\nabla }\left( t\right) \right) .
\end{split}
\end{equation*}
For each $t\in\mathbb{T}$, define
$\gamma\left( t\right)
:= \lim_{s\rightarrow t}\frac{\sigma \left(
t\right) -s}{\sigma \left( t\right) +2t-2s-\rho \left( t\right) }$
and $\tilde{\gamma}\left( t\right)
:= \lim_{s\rightarrow t}\frac{\left(
2t-s\right) -\rho \left( t\right) }{\sigma \left( t\right) +2t-2s-\rho
\left( t\right)}$.
It is clear that
$\gamma\left( t\right) +\tilde{\gamma}\left( t\right) =1$.
Note that if $t\in \mathbb{T}$ is dense, then
\begin{equation*}
\gamma\left( t\right) = \lim_{s\rightarrow t}\frac{\sigma \left(
t\right) -s}{\sigma \left( t\right) +2t-2s-\rho \left( t\right) }
=\lim_{s\rightarrow t}\frac{t-s}{2t-2s}
=\frac{1}{2}
\end{equation*}
and, therefore, $\tilde{\gamma}\left( t\right) =1/2$.
On the other hand, if $t\in \mathbb{T}$ is not dense, then
\begin{equation*}
\gamma\left( t\right) = \lim_{s\rightarrow t}\frac{\sigma \left(
t\right) -s}{\sigma \left( t\right) +2t-2s-\rho \left( t\right)}
= \frac{\sigma \left( t\right) -t}{\sigma \left( t\right)
-\rho \left(t\right)}
\end{equation*}
and
$\tilde{\gamma}\left( t\right) =\frac{t-\rho \left( t\right) }{\sigma \left(
t\right) -\rho \left( t\right) }$.
Hence, functions $\gamma, \tilde{\gamma} :\mathbb{T}\rightarrow \mathbb{R}$
are well defined and, if $f$ is delta and nabla differentiable, then
$f^{\diamondsuit }\left( t\right) = \gamma\left( t\right)
f^{\Delta}\left( t\right) +\tilde{\gamma}\left( t\right) f^{\nabla }\left( t\right)
= \gamma\left( t\right) f^{\Delta }\left( t\right)
+\left( 1-\gamma\left( t\right) \right) f^{\nabla }\left( t\right)$.
\end{proof}

\begin{remark}
Functions $\gamma, \tilde{\gamma} : \mathbb{T}\rightarrow \mathbb{R}$
are bounded and nonnegative:
$0 \leqslant \gamma(t), \tilde{\gamma}(t) \leqslant 1$.
This is due to the fact that
$\rho \left( t\right)\leqslant t \leqslant\sigma \left( t\right)$
for every $t\in \mathbb{T}$.
\end{remark}

\begin{corollary}
\label{cor:diam}
If $f$ is delta and nabla differentiable
and if function $\gamma(\cdot)$ in \eqref{eq:gamma}
is a constant, $\gamma(t) \equiv \alpha$,
then the symmetric derivative coincides
with the diamond-$\alpha$ derivative:
$f^{\diamondsuit}(t) = \alpha f^{\Delta}\left( t\right)
+(1-\alpha) f^{\nabla}(t)$.
\end{corollary}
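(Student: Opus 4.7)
The plan is to derive this as an immediate specialization of Proposition~\ref{ts:delta nabla}. Since $f$ is assumed delta and nabla differentiable, that proposition already guarantees that $f$ is symmetric differentiable and gives the explicit identity
\[
f^{\diamondsuit}(t) = \gamma(t)\, f^{\Delta}(t) + \bigl(1-\gamma(t)\bigr) f^{\nabla}(t)
\]
for every $t\in \mathbb{T}_{\kappa}^{\kappa}$, with $\gamma(t)$ defined by \eqref{eq:gamma}.

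The only remaining step is substitution: under the hypothesis $\gamma(t) \equiv \alpha$, the coefficient $\gamma(t)$ in the identity above is replaced by the constant $\alpha$ and $1-\gamma(t)$ by $1-\alpha$, which yields $f^{\diamondsuit}(t) = \alpha f^{\Delta}(t) + (1-\alpha) f^{\nabla}(t)$, i.e.\ exactly the diamond-$\alpha$ derivative of \cite{Sheng}. No further calculation is required and there is no genuine obstacle; the content of the corollary lies entirely in observing that the variable weight $\gamma(t)$ produced by the limit \eqref{eq:gamma} reduces, in the constant case, to the fixed weight $\alpha$ used in the diamond-$\alpha$ construction.
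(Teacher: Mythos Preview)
Your proposal is correct and matches the paper's intent: the corollary is stated without a separate proof precisely because it is an immediate substitution of $\gamma(t)\equiv\alpha$ into the identity of Proposition~\ref{ts:delta nabla}.
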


In the classical case $\mathbb{T} = \mathbb{R}$ it can be proved that
``A continuous function is necessarily increasing in any interval in which its
symmetric derivative exists and is positive'' \cite{Thomson}.
We note that this result is not valid for the symmetric derivative on time scales.
For instance, consider the time scale $\mathbb{T}=\mathbb{N}$ and function
$f(n) = n$ if $n$ is odd and $f(n) = 10 n$ if $n$ is even.
The symmetric derivative of $f$ is given by
$f^{\diamondsuit }\left( n\right) = \frac{f^{\sigma }\left( n\right)
-f^{\rho }\left( n\right) }{\sigma \left( n\right) -\rho \left( n\right) }
=\frac{10\left( n+1\right) -10\left( n-1\right) }{\left( n+1\right)
-\left( n-1\right)} = 10$
for $n$ odd, while for $n$ even one has
$f^{\diamondsuit }\left( n\right) = \frac{f^{\sigma }\left( n\right)
-f^{\rho }\left( n\right) }{\sigma \left( n\right) -\rho \left( n\right)}
=\frac{\left( n+1\right) -\left( n-1\right) }{\left( n+1\right)
-\left(n-1\right)} = 1$.
Clearly, function $f$ is non-increasing although
its symmetric derivative is always positive.
In this example, the symmetric derivative
coincides with the diamond-$\alpha$ derivative, $\alpha=1/2$,
showing that there is an inconsistency in Corollary~2.1 of \cite{Ozkan}.


\section*{Acknowledgments}

This work was supported by {\it FEDER} funds through {\it COMPETE}
--- Operational Programme Factors of Competitiveness
(``Programa Operacional Factores de Competitividade'')
and by Portuguese funds through the
{\it Center for Research and Development in Mathematics and Applications} (University of Aveiro)
and the Portuguese Foundation for Science and Technology
(``FCT--Funda\c{c}\~{a}o para a Ci\^{e}ncia e a Tecnologia''),
within project PEst-C/MAT/UI4106/2011 with COMPETE number FCOMP-01-0124-FEDER-022690.
Brito da Cruz is also supported by FCT through the Ph.D. fellowship
SFRH/BD/33634/2009. The authors are grateful to the referees for their valuable
comments and helpful suggestions.




\begin{thebibliography}{9}

\bibitem{Hilger}
B. Aulbach\ and\ S. Hilger,
A unified approach to continuous and discrete dynamics,
in {\it Qualitative theory of differential equations (Szeged, 1988)}, 37--56,
Colloq. Math. Soc. J\'anos Bolyai, 53 North-Holland, Amsterdam, 1990.

\bibitem{Bohner:1}
M. Bohner\ and\ A. Peterson,
{\it Dynamic equations on time scales},
Birkh\"auser Boston, Boston, MA, 2001.

\bibitem{Bohner:2}
M. Bohner\ and\ A. Peterson,
{\it Advances in dynamic equations on time scales},
Birkh\"auser Boston, Boston, MA, 2003.

\bibitem{Kac}
V. Kac\ and\ P. Cheung,
{\it Quantum calculus},
Universitext, Springer, New York, 2002.

\bibitem{Ozkan}
U. M. \"Ozkan\ and\ B. Kaymak\c calan,
Basics of diamond-$\alpha$ partial dynamic calculus on time scales,
Math. Comput. Modelling {\bf 50} (2009), no.~9-10, 1253--1261.

\bibitem{Serafin}
R. A. Serafin,
On the symmetric difference quotient and its application
to the correction of orbits,
Celestial Mech. {\bf 26} (1982), no.~4, 383--393.

\bibitem{Sheng}
Q. Sheng, M. Fadag, J. Henderson\ and\ J. M. Davis,
An exploration of combined dynamic derivatives on time scales and their applications,
Nonlinear Anal. Real World Appl. {\bf 7} (2006), no.~3, 395--413.

\bibitem{Thomson}
B. S. Thomson,
{\it Symmetric properties of real functions},
Monographs and Textbooks in Pure and Applied Mathematics,
183, Dekker, New York, 1994.

\bibitem{Zygmund}
A. Zygmund,
{\it Trigonometric series. Vol. I, II},
reprint of the 1979 edition,
Cambridge Mathematical Library,
Cambridge Univ. Press, Cambridge, 1988.

\end{thebibliography}
\end{document}